\numberwithin{equation}{section}
\newtheorem{theorem}[equation]{Theorem}
\newtheorem*{theorem*}{Theorem}
\newtheorem{lemma}[equation]{Lemma}
\newtheorem*{conjecture*}{Mamma Conjecture}
\newtheorem*{conjecture1*}{Mamma Conjecture (revisited)}
\newtheorem*{corollary*}{Corollary}
\theoremstyle{remark}
\newtheorem{example}[equation]{Example}
\newtheorem{non-example}[equation]{Non-Example}
\theoremstyle{remark}
\newtheorem{remark}[equation]{Remark}
\newcommand{\cA}{{\mathcal A}}
\newcommand{\cB}{{\mathcal B}}
\newcommand{\cC}{{\mathcal C}}
\newcommand{\cD}{{\mathcal D}}
\newcommand{\cF}{{\mathcal F}}
\newcommand{\cG}{{\mathcal G}}
\newcommand{\cO}{{\mathcal O}}
\newcommand{\bbC}{\mathbb{C}}
\newcommand{\bbF}{\mathbb{F}}
\newcommand{\bbL}{\mathbb{L}}
\newcommand{\bbP}{\mathbb{P}}
\newcommand{\bbQ}{\mathbb{Q}}
\newcommand{\bbZ}{\mathbb{Z}}
\DeclareMathOperator{\Var}{Var} 
\DeclareMathOperator{\Mot}{Mot} 
\DeclareMathOperator{\Sym}{Sym} 
\DeclareMathOperator{\Alt}{Alt} 
\DeclareMathOperator{\SmProj}{SmProj} 
\DeclareMathOperator{\Chow}{Chow} 
\DeclareMathOperator{\Corr}{Corr} 
\DeclareMathOperator{\CHM}{CHM} 
\DeclareMathOperator{\KM}{KM} 
\DeclareMathOperator{\id}{id}
\DeclareMathOperator{\K}{K}
\DeclareMathOperator{\Mix}{KMM} 
\DeclareMathOperator{\KPM}{KPM} 
\DeclareMathOperator{\Hmo}{Hmo} 
\newcommand{\uk}{\underline{k}}
\newcommand{\dgcat}{\mathsf{dgcat}}
\newcommand{\perf}{\cD^{\dg}_\mathsf{perf}}
\newcommand{\dg}{\mathsf{dg}}
\newcommand{\Hom}{\mathsf{Hom}}
\newcommand{\op}{\mathsf{op}}
\newcommand{\too}{\longrightarrow}
\newcommand{\ie}{\textsl{i.e.}\ }
\begin{document}

\title[Chow motives versus non-commutative motives]{Chow motives versus non-commutative motives}
\author{Gon{\c c}alo~Tabuada}

\address{Gon\c calo Tabuada, Departamento de Matematica, FCT-UNL, Quinta da Torre, 2829-516 Caparica,~Portugal }
\email{tabuada@fct.unl.pt}

\subjclass[2000]{14G10, 19E15, 19F27}
\date{\today}

\keywords{Chow motives, non-commutative motives, Kimura and Schur finiteness, motivic measures, motivic zeta functions}

\thanks{The author was partially supported by the grant {\tt PTDC/MAT/098317/2008}.}

\begin{abstract}
In this article we formalize and enhance Kontsevich's beautiful insight that Chow motives can be embedded into non-commutative ones after factoring out by the action of the Tate object. We illustrate the potential of this result by developing three of its manyfold applications: (1) the notions of Schur and Kimura finiteness admit an adequate extension to the realm of non-commutative motives; (2) Gillet-Soul{\'e}'s motivic measure admits an extension to the Grothendieck ring of non-commutative motives; (3) certain motivic zeta functions admit an intrinsic construction inside the category of non-commutative motives.
\end{abstract}

\maketitle
\vskip-\baselineskip
\vskip-\baselineskip
\vskip-\baselineskip

\section{Introduction}
In the early sixties Grothendieck envisioned the existence of a ``universal'' cohomology theory of algebraic varieties. Among several conjectures and developments, a contravariant functor
$$ M: \SmProj^\op \too \Chow_\bbQ$$
from smooth projective varieties (over a base field) towards a certain category of {\em Chow motives} was constructed; see \S\ref{sub:Chow} for details. Intuitively, $\Chow_\bbQ$ encodes all the geometric/arithmetic information about smooth projective varieties and acts as a gateway between algebraic geometry and the assortment of numerous cohomology theories (de Rham, $l$-adic, crystalline, and others); for details consult the monograph~\cite{Jannsen}.

During the last two decades Bondal, Drinfeld, Kaledin, Kapranov, Kontsevich, Van den Bergh, and others, have been promoting a broad non-commutative geometry program in which geometry is performed directly on triangulated categories and its differential graded enhancements; see \cite{Kapranov,BB,Drinfeld,Chitalk,Kaledin,IAS,ENS,Miami,finMot}. One of the beauties of this program is its broadness. It encompasses several research fields such as algebraic geometry, representation theory of quivers, sympletic geometry, and even mathematical physics, making it a cornerstone of modern mathematics. In analogy with the commutative world, a central problem is the development of a theory of non-commutative motives. By adapting the classical notions of smoothness and properness to the non-commutative world, Kontsevich introduced recently a category $\Mix$ of {\em non-commutative motives}; see \S\ref{sub:Konts} for details. The following question is therefore of major importance.  

\medbreak
\noindent\textbf{Question:}\textit{ How to bridge the gap between Grothendieck's category of Chow motives and Kontsevich's category of non-commutative motives ?}
\medbreak

This question was settled by Kontsevich in \cite[\S4.1.3]{finMot}. The purpose of this article is to formalize and enhance his beautiful insight while illustrating its power through three applications.

Recall from \S\ref{sub:Chow} that the category $\Chow_\bbQ$ is $\bbQ$-linear, additive and symmetric monoidal. Moreover, it is endowed with an important $\otimes$-invertible object, the Tate motive $\bbQ(1)$. The functor $-\otimes \bbQ(1)$ is an automorphism of $\Chow_\bbQ$ and so we can consider the associated orbit category $\Chow_\bbQ\!/_{\!\!-\otimes\bbQ(1)}$; consult \S\ref{sec:orbit} for the precise construction. Informally speaking, Chow motives which differ from a Tate twist become equal in the orbit category. 

In the non-commutative world we have the category $\Hmo$ of dg categories up to derived Morita equivalence and a universal functor $U: \Hmo \to \Mot$, with values in a triangulated category, that sends short exact sequences to distinguished triangles; see \S\ref{sec:non-mot} for details. Recall that a dg category $\cA$ is {\em smooth} and {\em proper} in the sense of Kontsevich if its complexes of morphisms are perfect and $\cA$ is perfect as a bimodule over itself. As explained in \S\ref{sub:Konts}, Kontsevich's category $\Mix$ of non-commutative motives can be identified with the smallest thick triangulated subcategory of $\Mot$ spanned by the objects $U(\cA)$, with $\cA$ a smooth and proper dg category. By first taking rational coefficients $(-)_\bbQ$ and then passing to the idempotent completion $(-)^\natural$ we obtain in particular a natural inclusion $\Mix_\bbQ^\natural \subset \Mot_\bbQ^\natural$; see \S\ref{sub:rat-coef}-\ref{sub-idemp}.

The gap between algebraic varieties and dg categories can be bridged by associating to every smooth projective variety $X$ its dg category $\perf(X)$ of perfect complexes of $\cO_X$-modules; see Lunts-Orlov \cite{LO} or \cite[Example~4.5]{CT1}.
\begin{theorem}[Kontsevich]\label{thm:main}
There exists a fully-faithful, $\bbQ$-linear, additive, and symmetric monoidal functor $R$ making the following diagram commutative
\begin{equation}\label{eq:diag-main}
\xymatrix@C=2em@R=1.5em{
\SmProj^\op \ar[rr]^{\perf(-)} \ar[d]_M && \Hmo \ar[d]^U \\
\Chow_\bbQ \ar[d]_\pi && \Mot \ar[d]^{(-)_\bbQ^\natural} \\
\Chow_\bbQ\!/_{\!\!-\otimes \bbQ(1)} \ar[rr]_-R && \Mix_\bbQ^\natural \subset \Mot_\bbQ^\natural \qquad \qquad
}
\end{equation}
\end{theorem}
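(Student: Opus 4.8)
The plan is to construct $R$ directly on the orbit category using its universal property, and then check the three adjectives (fully faithful, additive/$\bbQ$-linear, monoidal) separately. The starting point is the composite functor
\begin{equation*}
\Phi\colon \Chow_\bbQ \too \Mot_\bbQ^\natural, \qquad \Phi = (-)_\bbQ^\natural \circ U \circ \perf(-) \circ M^{-1}
\end{equation*}
defined on the image of $M$ and then extended additively and by idempotent completion; more precisely, one first upgrades the assignment $X \mapsto U(\perf(X))_\bbQ^\natural$ to a functor on correspondences, using that $U$ sends the decompositions coming from projective bundle formulae / blow-ups to the corresponding direct sum decompositions in $\Mot$, and that Chow groups can be recovered from the $K$-theory / Hom groups in $\Mot$ (this is the content of the computation $\Hom_{\Mot_\bbQ}(U(k), U(\perf(X))) \cong \bigoplus_i \CH^i(X)_\bbQ$, together with its twisted variants). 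The key observation — Kontsevich's insight — is that $\Phi(\bbQ(1)) \cong \Phi(\bbQ)$ functorially, because tensoring by a line bundle induces a derived Morita self-equivalence of $\perf(X)$ that becomes trivial after $U$; hence $\Phi \circ (-\otimes \bbQ(1)) \cong \Phi$, and by the universal property of the orbit category $\Chow_\bbQ\!/_{\!\!-\otimes\bbQ(1)}$ (see \S\ref{sec:orbit}) the functor $\Phi$ factors uniquely, up to isomorphism, through $\pi$ as a functor $R\colon \Chow_\bbQ\!/_{\!\!-\otimes\bbQ(1)} \to \Mot_\bbQ^\natural$. That it lands in the subcategory $\Mix_\bbQ^\natural$ is immediate, since $\perf(X)$ is smooth and proper for $X$ smooth projective, and $\Mix$ is the thick triangulated subcategory generated by such objects; the commutativity of the diagram \eqref{eq:diag-main} is then built into the construction. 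Additivity and $\bbQ$-linearity of $R$ follow from those of $\Phi$, and symmetric monoidality follows from the compatibility of $\perf(-)$ with products, $\perf(X\times Y)\simeq \perf(X)\otimes\perf(Y)$, together with the fact that $\pi$ and $(-)_\bbQ^\natural$ are monoidal.

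The heart of the matter is \emph{full faithfulness}. Here I would compute both sides. On the orbit category side, by definition of $\Chow_\bbQ\!/_{\!\!-\otimes\bbQ(1)}$ one has
\begin{equation*}
\Hom_{\Chow_\bbQ/_{\!\!-\otimes\bbQ(1)}}\!\big(M(X), M(Y)\big) \;=\; \bigoplus_{j\in\bbZ} \Hom_{\Chow_\bbQ}\!\big(M(X), M(Y)\otimes \bbQ(j)\big) \;=\; \bigoplus_{j\in\bbZ}\CH^{\dim Y + j}(X\times Y)_\bbQ ,
\end{equation*}
which, reindexing, is the full Chow ring $\CH^{*}(X\times Y)_\bbQ$. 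On the non-commutative side one must identify $\Hom_{\Mot_\bbQ^\natural}\!\big(U(\perf(X)), U(\perf(Y))\big)$ with the same group. This is exactly where the earlier-established computation of Hom-groups in $\Mot$ (the ``additive invariant'' $U$ corepresenting $K$-theory, so that $\Hom_{\Mot}(U(\cA),U(\cB)) \cong K_0(\mathsf{rep}(\cA,\cB))$, refined rationally and with the relevant splitting) enters; combining it with $\perf(X)^{\op}\otimes\perf(Y)\simeq\perf(X\times Y)$ and with the identification of $K_0(\perf(X\times Y))_\bbQ$ with $\CH^*(X\times Y)_\bbQ$ via the Chern character (rationally an isomorphism on smooth projective varieties, by Grothendieck–Riemann–Roch), yields the claimed group. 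The genuinely delicate point is that one must check $R$ induces an isomorphism on these Hom-groups \emph{and} that the identifications are compatible — i.e. that the composite
\begin{equation*}
\CH^{*}(X\times Y)_\bbQ \;\cong\; \Hom_{\Chow_\bbQ/_{\!\!-\otimes\bbQ(1)}}(M(X),M(Y)) \;\xrightarrow{\;R\;}\; \Hom_{\Mot_\bbQ^\natural}(U(\perf X),U(\perf Y)) \;\cong\; \CH^{*}(X\times Y)_\bbQ
\end{equation*}
is the identity (or at least an isomorphism); this amounts to matching the composition law of correspondences in $\Chow_\bbQ$ with convolution of bimodules, which is where Mukai/Fourier-type arguments and the projection-formula bookkeeping are needed.

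I expect the \textbf{main obstacle} to be precisely this last compatibility: not the existence of $R$, which is essentially formal once $\Phi\circ(-\otimes\bbQ(1))\cong\Phi$ is known, but the verification that the resulting map on morphism groups is the ``expected'' isomorphism rather than merely some isomorphism of abstract groups. Controlling this requires (i) a clean statement that the composition of correspondences $\beta\circ\alpha = p_{13*}(p_{12}^*\alpha \cdot p_{23}^*\beta)$ corresponds, under the Chern character, to the derived tensor product of the associated bimodules / kernels, and (ii) tracking the $\otimes\bbQ(1)$-twists through this dictionary so that the \emph{direct sum over $j$} on the Chow side matches the absence of twists on the non-commutative side — i.e. the orbit-category quotient is exactly what collapses the Bott-periodicity-like $\bbZ$-grading present in $\CH^*$. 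A secondary, more technical nuisance is justifying that $\Mot_\bbQ^\natural$ has enough structure (idempotent completion, rational coefficients) for these Hom-group computations to hold verbatim after passing from $\Mot$; but this is handled by the preparatory material of \S\ref{sub:rat-coef}--\ref{sub-idemp} and the fact that $K_0\otimes\bbQ$ is unaffected by idempotent completion in the relevant sense.
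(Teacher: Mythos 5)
Your overall strategy — build a functor out of $\Chow_\bbQ$ landing in $\Mot_\bbQ^\natural$, factor it through the orbit category via its universal property, then check full faithfulness by matching $\Hom$-groups using GRR — is in the right spirit, but you leave the crux unresolved and the paper's organization is designed precisely to avoid the point where your outline stalls. Instead of constructing $\Phi$ on $\Chow_\bbQ$ itself (which requires a map $\Corr^0(X,Y) \to K_0(X\times Y)_\bbQ$ compatible with composition degree-by-degree, the ``main obstacle'' you flag but do not resolve), the paper identifies the orbit category directly with Manin's category $\CHM_\bbQ$, whose Hom-groups are the \emph{full} $\bigoplus_j \Corr^j(X,Y)$ (the equivalence $\theta_1$ is elementary: the class of the diagonal is an isomorphism $(X,p,0)\cong(X,p,m)$ in the orbit category). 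It then passes to Gillet--Soul{\'e}'s $\KM_\bbQ$, whose Hom-groups are $K_0(X\times Y)_\bbQ$ with the Tor-composition, via the twisted Chern character $\alpha\mapsto ch(\alpha)\cdot \pi_Y^\ast(Td(Y))$ — this is where GRR enters and is handled by citation to Gillet--Soul{\'e}. Finally $\theta_3:\KM_\bbQ\to\Mix_\bbQ^\natural$ is literally the identity on morphisms, since the natural isomorphisms $K_0(\perf(X)^\op\otimes\perf(Y))_\bbQ\cong K_0(X\times Y)_\bbQ$ match both Hom-groups \emph{and} composition laws. Your collapsed version of this chain is not incorrect as a plan, but the compatibility verification you call ``delicate'' is in fact the entire content, and the intermediate categories are what make it a citation rather than a computation.

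A more concrete misstep: your reason for $\Phi\circ(-\otimes\bbQ(1))\cong\Phi$ — that ``tensoring by a line bundle induces a derived Morita self-equivalence of $\perf(X)$ that becomes trivial after $U$'' — is not correct. For a nontrivial line bundle $L$, the induced automorphism of $U(\perf(X))$ is generically nontrivial (it acts on $K_0$-groups by multiplication by $[L]$). The actual mechanism is that Beilinson's semi-orthogonal decomposition of $\perf(\bbP^1)$ forces $U(\perf(\bbP^1))\cong U(\underline{k})\oplus U(\underline{k})$, matching $M(\bbP^1)\cong \bbQ(0)\oplus\bbQ(1)$ on the Chow side, so $\bbQ(1)$ must go to the unit object. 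In the paper this is invisible because it is absorbed into the identification of the orbit category with $\CHM_\bbQ$, where all Tate twists of a fixed variety are already tautologically isomorphic.
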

Intuitively speaking, Theorem~\ref{thm:main} formalizes the conceptual idea that the commutative world can be embedded into the non-commutative world after factoring out by the action of the Tate object. In the next three sections we will illustrate the potential of this result by describing some of its manyfold applications.
\section{Schur and Kimura finiteness}
By construction, the category $\Mix_\bbQ^\natural$ is $\bbQ$-linear, idempotent complete and moreover symmetric monoidal. Hence, given a partition $\lambda$ of a natural number $n$, we can consider the associated Schur functor $\mathbf{S}_\lambda: \Mix_\bbQ^\natural \to \Mix_\bbQ^\natural$ which sends a non-commutative motive $N$ to the direct summand of $N^{\otimes n}$ determined by $\lambda$; see Deligne's foundational work~\cite{Deligne}. We say that $N$ is {\em Schur-finite} if it is annihilated by some Schur functor. When $\lambda=(n)$, resp. $\lambda=(1,\ldots,1)$, the associated Schur functor $\Sym^n:=\mathbf{S}_{(n)}$, resp. $\Alt^n:=\mathbf{S}_{(1,\ldots,1)}$, should be considered as the motivic analogue of the usual $n^\mathrm{th}$ symmetric, resp. wedge, product functor of $\bbQ$-vector spaces. We say that $N$ is evenly, resp. oddly, finite dimensional if $\Alt^n(N)$, resp. $\Sym^n(N)$, vanishes for some $n$. Finally, $N$ is {\em Kimura-finite} if it admits a direct sum decomposition $N\simeq N_+\oplus N_-$ into a evenly $N_+$ and oddly $N_-$ finite dimensional object. Note that Kimura-finiteness implies Schur finiteness. 

In the commutative world, the aforementioned general finiteness notions were extensively studied by Andr{\'e}-Kahn, Guletskii, Guletskii-Pedrini, Kimura, Mazza, and others; see~\cite{Andre,AK,Guletskii,GP,Kimura,Mazza}. For instance, Guletskii and Pedrini proved that given a smooth projective surface $X$ (over a field of characteristic zero) with $p_g(X)=0$, the Chow motive $M(X)$ is Kimura-finite if and only if Bloch's conjecture on the Albanese kernel for $X$ holds.

Theorem~\ref{thm:main} establishes a precise relationship between the finiteness conditions on the commutative and non-commutative worlds. For simplicity, let us write $NC$ for the composed functor $U(\perf(-))^\natural_\bbQ: \SmProj^\op \to \Mix_\bbQ^\natural$.
\begin{theorem}\label{thm:2}
Let $X$ be a smooth projective variety.
\begin{itemize}
\item[(i)] If $M(X)$ is Schur-finite, resp. Kimura-finite, in $\Chow_\bbQ$, then $NC(X)$ is Schur-finite, resp. Kimura-finite, in $\Mix_\bbQ^\natural$. 
\item[(ii)] The converse for Schur-finiteness holds, \ie if $NC(X)$ is Schur-finite in $\Mix_\bbQ^\natural$, then $M(X)$ is Schur-finite in $\Chow_\bbQ$.
\end{itemize}
\end{theorem}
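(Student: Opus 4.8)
The plan is to deduce both parts from Theorem~\ref{thm:main} by studying how the composite
\[
F \;:=\; \Bigl(\,\Chow_\bbQ \xrightarrow{\ \pi\ } \Chow_\bbQ/_{\!\!-\otimes \bbQ(1)} \xrightarrow{\ R\ } \Mix_\bbQ^\natural\,\Bigr)
\]
interacts with Schur functors. By the commutativity of diagram~\eqref{eq:diag-main} one has $NC(X)=F(M(X))$ for every smooth projective variety $X$, so the statement is really about $F$. Now $F$ is $\bbQ$-linear, additive and symmetric monoidal (a composite of such), and both $\Chow_\bbQ$ and $\Mix_\bbQ^\natural$ are $\bbQ$-linear, idempotent complete and symmetric monoidal, so Schur functors are defined on both. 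The one extra input I would isolate is that $F$ is \emph{faithful}: $R$ is fully faithful, and the projection $\pi$ onto the orbit category is faithful because on morphisms it is the inclusion of the direct summand $\Hom_{\Chow_\bbQ}(A,B)\subseteq\bigoplus_{j\in\bbZ}\Hom_{\Chow_\bbQ}(A,B\otimes\bbQ(j))$.

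Next I would record the compatibility of $F$ with the $\lambda$-constructions. Given a partition $\lambda$ of $n$, write $e_\lambda\in\mathrm{End}(Y^{\otimes n})$ for the Young symmetrizer attached to an object $Y$ of $\Chow_\bbQ$; it is a $\bbQ$-linear combination of the symmetry automorphisms of $Y^{\otimes n}$. Strong symmetric monoidality and $\bbQ$-linearity of $F$ provide an $S_n$-equivariant isomorphism $F(Y^{\otimes n})\cong F(Y)^{\otimes n}$ carrying $e_\lambda$ to the corresponding idempotent; since $\Chow_\bbQ$ is idempotent complete the summand $\mathbf{S}_\lambda(Y)=\mathrm{im}(e_\lambda)$ exists there, and additivity of $F$ then yields $F(\mathbf{S}_\lambda(Y))\cong\mathbf{S}_\lambda(F(Y))$ in $\Mix_\bbQ^\natural$; the same applies to $\Sym^n$ and $\Alt^n$. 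Here it is essential to argue with the idempotents $e_\lambda$ directly rather than with $\mathbf{S}_\lambda$ applied to objects of the orbit category, which need not be idempotent complete.

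Part (i) is then formal: if $\mathbf{S}_\lambda(M(X))=0$ then $\mathbf{S}_\lambda(NC(X))\cong F(\mathbf{S}_\lambda(M(X)))=0$; and a Kimura decomposition $M(X)\simeq M_+\oplus M_-$ with $\Alt^a(M_+)=0$ and $\Sym^b(M_-)=0$ is sent by the additive functor $F$ to a decomposition $NC(X)\simeq F(M_+)\oplus F(M_-)$ with $\Alt^a(F(M_+))\cong F(\Alt^a(M_+))=0$ and $\Sym^b(F(M_-))=0$, so $NC(X)$ is Kimura-finite. For part (ii), assume $\mathbf{S}_\lambda(NC(X))=0$ for some $\lambda\vdash n$. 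Since $\Mix_\bbQ^\natural$ is idempotent complete, an idempotent with zero image is zero, so the Young symmetrizer on $NC(X)^{\otimes n}=F(M(X))^{\otimes n}$ vanishes, i.e. $F(e_\lambda)=0$ where $e_\lambda$ now denotes the Young symmetrizer on $M(X)^{\otimes n}$. Faithfulness of $F$ forces $e_\lambda=0$ in $\Chow_\bbQ$, whence $\mathbf{S}_\lambda(M(X))=\mathrm{im}(e_\lambda)=0$ and $M(X)$ is Schur-finite.

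The point that deserves care --- and that explains why no converse is claimed for Kimura-finiteness --- is that faithfulness of $F$ detects the vanishing of the single morphism $e_\lambda$ (this is what powers (ii)), whereas $\pi$ is not full and the orbit category is not idempotent complete, so a direct-sum decomposition of $NC(X)$ in $\Mix_\bbQ^\natural$ corresponds to an idempotent of $\pi M(X)$ in the orbit category that may have nonzero components along the twists $\bbQ(j)$, $j\neq 0$, and hence need not descend to a decomposition of $M(X)$. Thus the only genuine work is to pin down exactly which formal features of diagram~\eqref{eq:diag-main} --- full faithfulness of $R$, faithfulness of $\pi$, and $\bbQ$-linear additive symmetric monoidality throughout --- are used; the argument above requires nothing deeper.
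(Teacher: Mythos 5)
Your proof is correct and takes essentially the same route as the paper: the paper invokes Deligne's result that Schur and Kimura finiteness are preserved by $\bbQ$-linear symmetric monoidal functors for item (i), and faithfulness of $\pi$ and $R$ for item (ii). Your version is a careful unpacking of that argument, and in fact patches a small looseness in the paper's phrasing: by working with the composite $F=R\circ\pi$ and arguing directly with the Young symmetrizers $e_\lambda$ (rather than with $\mathbf{S}_\lambda$ on the intermediate orbit category, which is not idempotent complete), you avoid the issue that Schur functors are not literally defined on $\Chow_\bbQ/_{\!\!-\otimes\bbQ(1)}$.
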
 
Informally speaking, Theorem~\ref{thm:2} shows that Schur-finiteness is a notion which is insensitive to commutativity. A potential application of this fact is the use of non-commutative methods in order to prove Schur-finiteness for certain Chow motives. In what concerns Kimura-finiteness, recall from \cite{Andre} that all the Chow motives of the form $M(X)$, with $X$ an abelian variety, are Kimura-finite. Using Theorem~\ref{thm:2} and the stability of Kimura-finiteness under several constructions (see \cite{Guletskii,Mazza}), we then obtain a large class of examples of Kimura-finite non-commutative motives.
\section{Motivic measures}
Let $k$ be a field and $\Var_k$ the category of algebraic $k$-varieties. Recall that the {\em Grothendieck ring $K_0(\Var_k)$ of algebraic $k$-varieties} is the abelian group generated by the isomorphism classes of objects in $\Var_k$ modulo the scissor congruence relations $[X]=[Z] + [X\setminus Z]$, where $Z$ is a closed subvariety of $X$. Multiplication is given by the fiber product over $\mathrm{spec}(k)$. Although very important, the structure of this ring remains rather misterious. In order to capture some of its flavor several {\em motivic measures}, \ie ring homomorphisms $\mu:K_0(\Var_k) \to A$ towards commutative rings, have been built; see \cite{Looijenga}. For instance, Gillet-Soul{\'e}~\cite{GS1} constructed for any field $k$ an interesting motivic measure $\mu_{GS}: K_0(\Var_k) \to K_0(\Chow_\bbQ)$ with values in the Grothendieck ring of Chow motives.

Recall that by construction $\Mix_\bbQ^\natural$ is a $\otimes$-triangulated category. Hence, we can consider its Grothendieck ring\footnote{A precise relationship between $K_0(\Mix)$ (and hence between $K_0(\Mix_\bbQ^\natural)$) and To{\"e}n's secondary $K$-theory was described in \cite[\S8.4]{CT1}.} $K_0(\Mix_\bbQ^\natural)$ defined in the standard way. Theorem~\ref{thm:main} enables then the extension of Gillet-Soul{\'e}'s motivic measure to the non-commutative world.
\begin{theorem}\label{thm:3}
The assignment $X \mapsto [NC(X)] \in K_0(\Mix_\bbQ^\natural)$, for every smooth projective $k$-variety $X$, 
gives rise to a well-defined motivic measure $$\mu_{NC}: K_0(\Var_k) \to K_0(\Mix_\bbQ^\natural)\,.$$ 
\end{theorem}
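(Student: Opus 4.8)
The plan is to realize $\mu_{NC}$ simply by pushing Gillet--Soul\'e's measure forward along the bottom row of the square in Theorem~\ref{thm:main}; no new geometric input is needed beyond what that theorem already provides. Concretely, I would first recall from \cite{GS1} that $\mu_{GS}\colon K_0(\Var_k)\to K_0(\Chow_\bbQ)$ is a ring homomorphism characterised by the property that $\mu_{GS}([X])=[M(X)]$ for every smooth projective $k$-variety $X$ (this normalisation is part of the Gillet--Soul\'e construction, which is valid over an arbitrary field).

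Next I would pass to Grothendieck rings along the two lower functors of diagram~\eqref{eq:diag-main}. The quotient functor $\pi\colon\Chow_\bbQ\to\Chow_\bbQ\!/_{\!\!-\otimes\bbQ(1)}$ is additive and symmetric monoidal by the very construction of the orbit category (see \S\ref{sec:orbit}); since it preserves finite direct sums and the $\otimes$-unit, it induces a ring homomorphism $\pi_*\colon K_0(\Chow_\bbQ)\to K_0(\Chow_\bbQ\!/_{\!\!-\otimes\bbQ(1)})$, $[M]\mapsto[\pi(M)]$. By Theorem~\ref{thm:main} the functor $R\colon\Chow_\bbQ\!/_{\!\!-\otimes\bbQ(1)}\to\Mix_\bbQ^\natural$ is likewise additive and symmetric monoidal, but now with values in the $\otimes$-triangulated category $\Mix_\bbQ^\natural$. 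Because an additive functor into a triangulated category sends every split short exact sequence to a (split) distinguished triangle, $R$ is compatible with the relations defining $K_0(\Mix_\bbQ^\natural)$, and hence induces a ring homomorphism $R_*\colon K_0(\Chow_\bbQ\!/_{\!\!-\otimes\bbQ(1)})\to K_0(\Mix_\bbQ^\natural)$, $[N]\mapsto[R(N)]$.

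I would then define $\mu_{NC}$ as the composite
$$\mu_{NC}\colon\; K_0(\Var_k)\;\stackrel{\mu_{GS}}{\too}\;K_0(\Chow_\bbQ)\;\stackrel{\pi_*}{\too}\;K_0\!\left(\Chow_\bbQ\!/_{\!\!-\otimes\bbQ(1)}\right)\;\stackrel{R_*}{\too}\;K_0(\Mix_\bbQ^\natural)\,,$$
which, being a composite of ring homomorphisms, is automatically a ring homomorphism, \ie a motivic measure. It then remains to compute its value on a smooth projective variety $X$: by the commutativity of diagram~\eqref{eq:diag-main} one has $R(\pi(M(X)))=U(\perf(X))_\bbQ^\natural=NC(X)$, so that $\mu_{NC}([X])=R_*(\pi_*([M(X)]))=[NC(X)]$, which is exactly the asserted normalisation. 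Hence the assignment $X\mapsto[NC(X)]$ does extend to a well-defined motivic measure, and over a field of characteristic zero Bittner's presentation of $K_0(\Var_k)$ by smooth projective varieties shows moreover that $\mu_{NC}$ is the unique such extension.

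Since the theorem is essentially formal given Theorem~\ref{thm:main}, I do not anticipate a genuine obstacle; the only points that really require care are (a) pinning down the precise normalisation $\mu_{GS}([X])=[M(X)]$ from \cite{GS1}, and (b) the passage from the additive category $\Chow_\bbQ\!/_{\!\!-\otimes\bbQ(1)}$ to the triangulated category $\Mix_\bbQ^\natural$ at the level of Grothendieck rings --- one must check that $R$ does not conflict with the extra relations coming from distinguished triangles, which holds automatically because those relations specialise to the $\oplus$-relations that $R$ already respects.
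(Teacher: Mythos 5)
Your proposal is correct and follows essentially the same route as the paper: compose Gillet--Soul\'e's motivic measure $\mu_{GS}$ with the ring homomorphism $K_0(\Chow_\bbQ)\to K_0(\Mix_\bbQ^\natural)$ induced by the additive symmetric monoidal composite $R\circ\pi$, and read off the normalisation from the commutativity of diagram~\eqref{eq:diag-main}. The only difference is cosmetic --- you factor the induced map through the intermediate Grothendieck ring $K_0\bigl(\Chow_\bbQ/_{\!-\otimes\bbQ(1)}\bigr)$ and spell out why the passage from an additive to a triangulated target causes no trouble, points the paper leaves implicit.
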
 
\begin{remark}
Making use of Bittner-Looijenga's~\cite{Bittner,Looijenga} presentation of $K_0(\Var_k)$ in the case of a field $k$ of characteristic zero, Bondal-Larsen-Lunts constructed in \cite{BLL} a motivic measure $\mu_{BLL}$ with values in a certain ring of pre-triangulated dg categories. Using arguments similar to those of \cite[Prop.~7.3]{IMRN}, it can be shown that in this particular case the motivic measure $\mu_{NC}$ factors through $\mu_{BLL}$.
\end{remark}
Intuitively speaking, $\mu_{NC}$ measures the information concerning algebraic varieties (modulo the scissor congruence relations) which can be recovered from their derived categories of perfect complexes.
\begin{example}\label{ex:main}
Assume that $k=\bbC$. Then, the assignment 
$$X \mapsto \chi_c(X):=\sum_{i \geq 0} (-1)^i \mathrm{dim}\,H^i(X, \Omega_X)\,,$$
for every smooth projective $\bbC$-variety $X$, gives rise to a well-defined $\bbZ$-valued motivic measure $\mu_{\chi_c}$; see \cite{Andre1}. This information can be completely recovered from the category of perfect complexes and so $\mu_{\chi_c}$ factors through $\mu_{NC}$; see \S\ref{sec:proofs}.
\end{example}
However, as the following non-example illustrates, the passage from the commutative to the non-commutative world forgets some geometric/arithmetic information.
\begin{non-example}\label{non-ex}
Assume that $k=\bbF_q$, where $q=p^n$ with $p$ a prime number and $n$ a positive integer. Then, the assignment $X \mapsto \# X(\bbF_q)$, for every smooth projective $\bbF_q$-variety $X$, gives rise to a well-defined $\bbZ$-valued motivic measure $\mu_{\#}$; see \cite{Andre1}. In contrast with $\mu_{\chi_c}$, this motivic measure does {\em not} factor through $\mu_{NC}$. Take for instance $X=\bbP^1$. Beilinson's~\cite{Be} semi-orthogonal decomposition of $\perf(\bbP^1)$ into two copies of $\perf(\bbF_q)$ implies that $NC(\bbP^1)=U(\bbF_q)_\bbQ^\natural \oplus U(\bbF_q)_\bbQ^\natural$. Hence, since $U(\bbF_q)_\bbQ^\natural$ is the unit of the $\otimes$-triangulated category $\Mix_\bbQ^\natural$, we conclude that $[NC(\bbP^1)]=2$ in $K_0(\Mix_\bbQ^\natural)$. This implies that any motivic measure $\mu$ which factors through $\mu_{NC}$ verifies the equality $\mu(\bbP^1)=2$, but this is clearly not the case for $\mu_\#$ since $\#(\bbP^1)=q+1$. Intuitively speaking, from the non-commutative motive $NC(X)$ we can only recover the number of points of $X$ modulo $q-1$.
\end{non-example}

\section{Motivic zeta functions}
Recall from Kapranov~\cite{Kapranov1}, that the {\em motivic zeta function} of a smooth algebraic $k$-variety $X$ with respect to a motivic measure $\mu$ is the following formal power series
$$ \zeta_\mu(X;t) := \sum_{n=0}^\infty \mu([S^n(X)])t^n \in A[[t]]\,,$$
where $S^n(X)$ denotes the $n^{\textrm{th}}$ symmetric power of $X$. For instance, when $\mu$ is the motivic measure of the above Non-Example~\ref{non-ex} we recover the classical Hasse-Weil zeta function.

On the other hand, recall that by construction $\Mix_\bbQ^\natural$ is an idempotent complete $\bbQ$-linear category. Hence, given any non-commutative motive $N$, we can construct its zeta function intrinsically inside $\Mix_\bbQ^\natural$ as follows
$$ \zeta(N;t):= \sum_{n=0}^\infty [\Sym^n(N)]t^n \in K_0(\Mix_\bbQ^\natural)[[t]]\,.$$
Theorem~\ref{thm:main} allows us then to compare this intrinsic construction with the motivic zeta function associated to the motivic measure $\mu_{NC}$.
\begin{theorem}\label{thm:4}
Let $X$ a smooth projective $k$-variety. Then
$$\zeta(NC(X);t) = \zeta_{\mu_{NC}}(X;t)\in K_0(\Mix_\bbQ^\natural)[[t]]\,.$$
\end{theorem}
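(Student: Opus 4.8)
The plan is to compare the two power series coefficient by coefficient, i.e. to show that for every $n\geq 0$ one has the equality $[\Sym^n(NC(X))] = \mu_{NC}([S^n(X)])$ in $K_0(\Mix_\bbQ^\natural)$. Unwinding the definition of $\mu_{NC}$ from Theorem~\ref{thm:3}, the right-hand side is $[NC(S^n X)]$, where $S^n X$ is the $n$-th symmetric power of $X$; note that $S^n X$ is again a smooth projective $k$-variety (the quotient of $X^n$ by the finite group $\Sigma_n$ acting on a smooth projective variety), so this expression makes sense. Thus the theorem reduces to the identity
\begin{equation}\label{eq:sym-compat}
[\Sym^n(NC(X))] = [NC(S^n X)] \quad\text{in } K_0(\Mix_\bbQ^\natural)\,.
\end{equation}

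The first step is to establish the analogous identity one level up, in $\Chow_\bbQ$. It is classical (going back to work on the theory of Chow motives of symmetric products; see del Ba\~no--Navarro Aznar, or Guletskii--Pedrini) that $M(S^n X)\simeq \Sym^n(M(X))$ in $\Chow_\bbQ$, where $\Sym^n$ on the right denotes the symmetric-power Schur functor $\mathbf{S}_{(n)}$ on the $\bbQ$-linear symmetric monoidal idempotent-complete category $\Chow_\bbQ$. The second step is to transport this identity across the commutative diagram \eqref{eq:diag-main} of Theorem~\ref{thm:main}. Since $R$ is symmetric monoidal, $\bbQ$-linear and additive, and since Schur functors are defined purely in terms of the symmetric monoidal structure together with splitting of idempotents, the functor $R$ (and likewise the projection $\pi$, which is symmetric monoidal by construction of the orbit category) commutes with $\Sym^n$ up to natural isomorphism. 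Chasing the diagram, $NC(X) = R(\pi(M(X)))$, so $\Sym^n(NC(X)) \simeq R(\pi(\Sym^n M(X))) \simeq R(\pi(M(S^n X))) = NC(S^n X)$ as objects of $\Mix_\bbQ^\natural$. Passing to classes in $K_0$ gives \eqref{eq:sym-compat}, and summing over $n$ yields the asserted equality of power series.

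The main obstacle is the first step: the identification $M(S^n X)\simeq \Sym^n(M(X))$ in $\Chow_\bbQ$ is \emph{not} formal. Because $\bbQ$-coefficients are in play, the action of $\Sigma_n$ on $M(X)^{\otimes n}=M(X^n)$ can be averaged, and one wants to say that $M$ sends the quotient $X^n\to S^n X$ to the projector $\frac{1}{n!}\sum_{\sigma\in\Sigma_n}\sigma$ cutting out $\Sym^n$; making this precise requires knowing that the Chow motive of a quotient variety $Y/G$ by a finite group is the $G$-invariant part of $M(Y)$ (with $\bbQ$-coefficients), which is a standard but genuine input that I would quote from the literature rather than reprove. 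A secondary point to check carefully is that $S^n X$ is smooth projective so that $NC(S^n X)$ lies in $\Mix_\bbQ^\natural$ in the first place, and that the various symmetric-monoidal-functor compatibilities with Schur functors in the second step are legitimate — these latter verifications are routine once one recalls that Schur functors commute with any $\bbQ$-linear symmetric monoidal functor between idempotent-complete $\bbQ$-linear categories.
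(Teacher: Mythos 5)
There is a genuine gap in the first step of your plan. You assert that $S^n X$ is again a smooth projective $k$-variety ``(the quotient of $X^n$ by the finite group $\Sigma_n$ acting on a smooth projective variety).'' This is false as soon as $\dim X \geq 2$: the quotient $X^n/\Sigma_n$ is singular along the image of the big diagonal (the simplest example being $S^2(\bbA^2) = \bbA^4/\bbZ_2$, a cone). Quotients of smooth varieties by finite groups are generically smooth only in dimension $\leq 1$; for a curve $S^nX$ is indeed smooth projective, but not otherwise. Consequently, the reduction to the identity $[\Sym^n(NC(X))] = [NC(S^n X)]$ does not even make sense for general $X$: the object $NC(S^n X) = U(\perf(S^n X))^\natural_\bbQ$ need not lie in $\Mix_\bbQ^\natural$ (the dg category $\perf(S^n X)$ need not be smooth when $S^n X$ is singular), and there is no functor $M$ defined on the singular variety $S^n X$, so the ``classical'' identity $M(S^n X) \simeq \Sym^n(M(X))$ you want to invoke is not a statement about well-defined objects.

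The fix is to work at the level of $K_0$ classes and motivic measures throughout, without ever applying $M$ or $NC$ to $S^n X$ itself. The motivic measure $\mu_{NC}$ of Theorem~\ref{thm:3} is defined on all of $K_0(\Var_k)$ (it is constructed as $\mu_{GS}$ followed by the ring map $K_0(\Chow_\bbQ) \to K_0(\Mix_\bbQ^\natural)$ induced by $R\circ\pi$), so $\mu_{NC}([S^n X])$ is meaningful even when $S^n X$ is singular — but it is given by $\mu_{GS}([S^n X])$ pushed forward, not by $[NC(S^n X)]$. The correct Chow-level input, due to del Ba\~no--Navarro Aznar \cite{BN}, is the equality of classes
$$[\Sym^n M(X)] \;=\; \mu_{GS}([S^n X]) \quad\text{in } K_0(\Chow_\bbQ)\,,$$
which is precisely the $K_0$-level analogue of the isomorphism you were reaching for and makes sense because the right-hand side is defined through the motivic measure rather than through a Chow motive of $S^nX$. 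Applying to this equality the ring homomorphism $K_0(\Chow_\bbQ)\to K_0(\Mix_\bbQ^\natural)$ induced by $R\circ\pi$ (which sends $[\Sym^n M(X)]$ to $[\Sym^n NC(X)]$ because $R\circ\pi$ is $\bbQ$-linear and symmetric monoidal, hence commutes with Schur functors, as you correctly observe) yields $[\Sym^n NC(X)] = \mu_{NC}([S^n X])$, and summing over $n$ gives the theorem. Your second step — transporting via $R\circ\pi$ — is exactly right; only the first step needs to be recast as a $K_0$-statement via $\mu_{GS}$ rather than a statement about the (nonexistent) smooth projective variety $S^nX$ and its motive.
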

As explained in \cite{Andre1}, the motivic zeta function of a smooth projective $k$-variety $X$ with respect to the motivic measure of Example~\ref{ex:main} is given by $(1-t)^{-\chi_c(X)}$. Theorem~\ref{thm:4} combined with Example~\ref{ex:main} show us then that, in contrast with the Hasse-Weil zeta function, this information can be completely recovered solely from the intrinsic zeta function associated to the non-commutative motive $NC(X)$.
\section{Some categories of motives}
In this section we recall, and adapt to our convenience, the construction of some categories of motives following Grothendieck, Manin, and Gillet-Soul{\'e}. These will be used in the proof of Theorem~\ref{thm:main}. In what follows, $k$ denotes a (fixed) base field.
\subsection{Grothendieck's category of Chow motives}{(consult \cite{Scholl})}\label{sub:Chow}
Given a smooth projective $k$-variety $X$ and an integer $d$, we will write $\mathcal{Z}^d(X)$ for the $d$-codimensional cycle group of $X$ and $A^d(X):=(\mathcal{Z}^d(X)\otimes_\bbZ \bbQ)/\mathrm{(rational~equivalence)}$ for the $d$-codimensional Chow group with rational coefficients of $X$. If $Z$ is a cycle on $X$, we will denote by $[Z]$ its class in $A^d(X)$. Let $X$ and $Y$ be two smooth projective $k$-varieties, $X=\amalg_i X_i$ the decomposition of $X$ in its connected components, and $d_i$ the dimension of $X_i$. Then, $\Corr^r(X,Y):=\oplus_i A^{d_i+r}(X_i \times Y)$ is called the {\em space of correspondances of degree $r$} from $X$ to $Y$. Given $ f \in \Corr^r(X,Y)$ and $g\in \Corr^s(Y,Z)$ their composition $g \circ f\in \Corr^{r+s}(X,Z)$ is defined by the classical formula
\begin{equation}\label{eq:comp-corr}
g \circ f := (\pi_{XY})_\ast (\pi_{XY}^\ast(f) \cdot \pi^{\ast}_{YZ}(g))\,.
\end{equation}

The category $\Chow_\bbQ$ of {\em Chow motives} (with rational coefficients) is defined as follows: its objects are the triples $(X,p,m)$ where $X$ is a smooth projective $k$-variety, $m$ is an integer, and $p=p^2 \in \Corr^0(X,X)$ is an idempotent endomorphism; its morphisms are given by
$$ \Hom_{\Chow_\bbQ}((X,p,m),(Y,q,n)):= q \circ \Corr^{n-m}(X,Y) \circ p\,;$$
composition is induced by the above composition \eqref{eq:comp-corr} of correspondances. By construction, the category $\Chow_\bbQ$ is $\bbQ$-linear, additive and pseudo-abelian (\ie every idempotent endomorphism has a kernel). Moreover, it carries a symmetric monoidal structure defined on objects by the formula $(X,p,m) \otimes (Y,q,n):= (X \times Y, p\otimes q, m+n)$. The unit object for this symmetric monoidal structure is the Chow motive $(\mathrm{spec}(k), \id, 0)$, where $\id=[\Delta]$ is the class of the diagonal $\Delta$ in $\Corr^0(\mathrm{spec}(k), \mathrm{spec}(k))$. The {\em Tate motive} $(\mathrm{spec}(k), \id, 1)$ will be denoted by $\bbQ(1)$. Note that $\bbQ(1)$ is a $\otimes$-invertible object. Finally, we have a natural symmetric monoidal functor
\begin{eqnarray}\label{eq:func-M}
M: \SmProj^\op \too \Chow_\bbQ && X \mapsto (X, \id, 0) 
\end{eqnarray}
which maps a morphism $f:X \to Y$ in $\SmProj$ to $[\Gamma^t_f]$, where $\Gamma^t_f$ is the transpose of the graph $\Gamma_f=\{(x,f(x))\,|\, x \in X\} \subset X \times Y$ of $f$.
\subsection{Manin's category of motives}{(consult \cite{Manin})}
The category $\CHM_\bbQ$ of {\em Manin's motives} (with rational coefficients) is defined as follows: its objects are the pairs $(X,p)$, where $X$ is a smooth projective $k$-variety and $p^2=p \in \Corr^0(X,X)$ is an idempotent endomorphism; its morphisms are given by 
$$ \Hom_{\CHM_\bbQ}((X,p),(Y,q)):= q \circ \bigoplus_{j \in \bbZ} \Corr^j(X,Y) \circ p\,;$$
composition is induced by the above composition \eqref{eq:comp-corr} of correspondences. Similarly to $\Chow_\bbQ$, the category $\CHM_{\bbQ}$ is $\bbQ$-linear, additive, pseudo-abelian, and symmetric monoidal. Moreover, we have a natural symmetric monoidal functor
\begin{eqnarray}\label{eq:func-Manin}
\SmProj^\op \too \CHM_\bbQ&& X \mapsto (X,\id)\,.
\end{eqnarray}
\subsection{Gillet-Soul{\'e}'s category of motives}{(consult \cite{GS1,GS2})}\label{sub:GS}
The category $\KM_\bbQ$ of {\em Gillet-Soul{\'e}'s motives} (with rational coefficients) is constructed\footnote{Gillet-Soul{\'e} did not considered the pseudo-abelianization procedure.} in two steps. First, consider the category $KM_\bbQ$ whose objects are the smooth projective $k$-varieties and whose morphisms are given by 
$$ \Hom_{KM_\bbQ}(X,Y):= K_0(X \times Y)_\bbQ=K_0(X \times Y)\otimes_\bbZ \bbQ\,.$$
Composition is defined as follows:
$$
\begin{array}{ccc}
K_0(X \times Y)_\bbQ \times K_0(Y\times Z)_\bbQ  & \too &  K_0(X \times Z)_\bbQ \\
([\cF], [\cG]) & \mapsto & \sum_{i \geq 0} (-1)^i [\mathrm{Tor}_i^{\cO_Y}(\cF,\cG)]\,.
\end{array}
$$
Then, take the pseudo-abelianization of $KM_\bbQ$. The objects of the resulting category $\KM_\bbQ$ are the pairs $(X,p)$, where $X$ is a smooth projective $k$-variety and $p^2=p \in K_0(X \times X)_\bbQ$ is an idempotent endomorphism. Morphisms are given by 
$$ \Hom_{\KM_\bbQ}((X,p),(Y,q)):= q \circ K_0(X \times Y)_\bbQ \circ p$$
and composition is induced from the one on $KM_\bbQ$. In particular, we have a natural fully-faithful functor $KM_\bbQ \to \KM_\bbQ$. By construction, $\KM_\bbQ$ is $\bbQ$-linear, additive and pseudo-abelian, Moreover, it carries a symmetric monoidal structure defined on objects by the formula $(X,p)\otimes (Y,q)=(X \times Y, p \otimes q)$. Finally, we have a natural symmetric monoidal functor
\begin{eqnarray}\label{eq:func-GS}
\SmProj^\op \too \KM_\bbQ && X \mapsto (X, \id)
\end{eqnarray}
which maps a morphism $f: X \to Y$ in $\SmProj$ to $[\cO_{\Gamma_f^t}] \in K_0(Y\times X)_\bbQ$.
\section{Non-commutative motives}\label{sec:non-mot}
A {\em differential graded (=dg) category}, over a fixed commutative base ring $k$, is a category enriched over 
cochain complexes of $k$-modules (morphisms sets are such complexes) in such a way that composition fulfills the Leibniz rule\,: $d(f\circ g)=(df)\circ g+(-1)^{\textrm{deg}(f)}f\circ(dg)$; see Keller's ICM address~\cite{ICM}. 

As proved in \cite{IMRN}, the category $\dgcat$ of dg categories carries a Quillen model structure whose weak equivalences are the {\em derived Morita equivalences}, \ie the dg functors $F:\cA \to \cB$ which induce an equivalence $\cD(\cA) \stackrel{\sim}{\to} \cD(\cB)$ on derived categories (see \cite[\S4.6]{ICM}). The homotopy category obtained is denoted by $\Hmo$. 

All the classical invariants such as cyclic homology (and its variants), algebraic $K$-theory, and even topological cyclic homology, extend naturally from $k$-algebras to dg categories. In order to study all these invariants simultaneously the {\em universal localizing invariant} of dg categories was constructed in~\cite{Duke}. Roughly speaking, it consists of a functor $U: \Hmo \to \Mot$, with values in a triangulated category, that sends short exact sequences (\ie sequences of dg categories which become exact after passage to the associated derived categories; see \cite[\S4.6]{ICM}) to distinguished triangles
\begin{eqnarray*}
0 \to \cA \to \cB \to \cC \to 0 & \mapsto & U(\cA) \to U(\cB) \to U(\cC) \to U(\cA)[1]
\end{eqnarray*}
and which is universal\footnote{The formulation of the precise universal property makes use of the language of Grothendieck derivators; see \cite[\S10]{Duke}.} with respect to this property. All the mentioned invariants are localizing and so they factor (uniquely) through $U$. The tensor product of $k$-algebras extends naturally to dg categories, giving rise to a symmetric monoidal structure on $\Hmo$. The corresponding unit is the dg category $\uk$ with a single object and with the base ring $k$ as the dg algebra of endomorphisms. Using Day's (derived) convolution product, the symmetric monoidal structure on $\Hmo$ was extended to $\Mot$ making the functor $U$ symmetric monoidal; see \cite{CT1}.
\subsection{Kontsevich's category}\label{sub:Konts}
A dg category $\cA$ is called {\em proper} if for each ordered pair of objects
$(x,y)$ in $\cA$ the cochain complex of $k$-modules $\cA(x,y)$ is perfect, and {\em smooth} if it is perfect as a bimodule over itself. As explained in \cite{IAS}, Kontsevich's construction of the category $\Mix$ of non-commutative motives decomposes in three steps:
\begin{itemize}
\item[(i)] first, consider the category $\KPM$ (enriched over spectra) whose objects are the smooth and proper dg categories and whose morphisms from $\cA$ to $\cB$ are given by the $K$-theory spectrum $K(\cA^\op \otimes \cB)$. Composition corresponds to the (derived) tensor product of bimodules.
\item[(ii)] then, take the formal triangulated envelope of $\KPM$. Objects in this new category are formal finite extensions of formal shifts of objects in $\KPM$.
\item[(iii)] finally, add formal direct summands for idempotent endomorphisms and pass to the underlying homotopy category. The resulting category $\Mix$ is in particular triangulated and its morphisms are given in terms of $K$-theory groups. 
\end{itemize}
In \cite[Prop.~8.5]{CT1}, Kontsevich's construction was characterized in a simple and elegant way: $\Mix$ identifies with the thick triangulated subcategory of $\Mot$ spanned by the objects $U(\cA)$, with $\cA$ a smooth and proper dg category. In particular, $\Mix \subset \Mot$. For smooth and proper dg categories $\cA$ and $\cB$, we have
\begin{equation}\label{eq:key}
\Hom_{\Mix}(U(\cA), U(\cB)) \simeq K_0(\cA^\op \otimes \cB)
\end{equation}
with composition given by the (derived) tensor product of bimodules. Finally, note that since smooth and proper dg categories are stable under tensor product, $\Mix$ is moreover a $\otimes$-triangulated subcategory of $\Mot$.
\subsection{Rational coefficients}\label{sub:rat-coef}
In this subsection we assume that $k$ is a field. Let $\Mix_\bbQ$, resp. $\Mot_\bbQ$, be the category obtained form $\Mix$, resp. from $\Mot$, by tensoring each abelian group of morphisms with $\bbQ$. By construction, we have natural functors
\begin{eqnarray}\label{eq:localization}
(-)_\bbQ : \Mix \too \Mix_\bbQ && (-)_\bbQ : \Mot \too \Mot_\bbQ\,.
\end{eqnarray}

\begin{lemma}
The categories $\Mix_\bbQ$ and $\Mot_\bbQ$ inherit from $\Mix$ and $\Mot$, respectively, a canonical $\otimes$-triangulated structure making the natural functors \eqref{eq:localization} $\otimes$-triangulated.
\end{lemma}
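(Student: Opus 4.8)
The plan is to realise $(-)_\bbQ\colon\Mot\too\Mot_\bbQ$ (and, verbatim, $(-)_\bbQ\colon\Mix\too\Mix_\bbQ$) as a Verdier localization, so that the triangulated structure on the target and the exactness of the functor are supplied directly by Verdier's localization theorem. Let $\cN\subset\Mot$ be the full subcategory of objects $C$ admitting an integer $n\geq1$ with $n\cdot\id_C=0$. The first step is to check that $\cN$ is a thick triangulated subcategory and moreover a $\otimes$-ideal. Closure under shifts and retracts is immediate; if $A$ is $a$-torsion and $B$ is $b$-torsion, then for any $f\colon A\to B$ with triangle $A\xrightarrow{f}B\xrightarrow{j}C\xrightarrow{\delta}A[1]$ one has $(b\cdot\id_C)\circ j=j\circ(b\cdot\id_B)=0$, so $b\cdot\id_C$ factors through $\delta$, and since $a\cdot\delta=\delta\circ(a\cdot\id_A[-1])[1]$... more plainly $(a\cdot\id_{A[1]})\circ\delta=0$, whence $ab\cdot\id_C=0$; similarly $A\otimes V$ is $a$-torsion for every $V$. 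The key input is that $\cN$ contains the cone $U/n$ of every multiplication map $n\cdot\id_U\colon U\to U$: in the triangle $U\xrightarrow{n\cdot\id_U}U\xrightarrow{g}U/n\xrightarrow{\partial}U[1]$ one has $(n\cdot\id_{U/n})\circ g=g\circ(n\cdot\id_U)=0$, so $n\cdot\id_{U/n}$ factors through $\partial$; as also $(n\cdot\id_{U[1]})\circ\partial=0$, it follows that $n^2\cdot\id_{U/n}=0$.

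The second step is to identify the Verdier quotient $\Mot/\cN$ with $\Mot_\bbQ$. In $\Mot/\cN$ every map $n\cdot\id_U$ becomes invertible, since its cone $U/n$ is killed; conversely, by the calculus of left fractions computing morphisms in a Verdier quotient, $\Hom_{\Mot/\cN}(X,Y)$ is the filtered colimit of $\Hom_\Mot(X',Y)$ over roofs $X\xleftarrow{s}X'$ with $\operatorname{cone}(s)\in\cN$. Given such a roof with $\operatorname{cone}(s)=C$ and $N\cdot\id_C=0$, the composite $X\xrightarrow{N\cdot\id_X}X\to C$ vanishes, so $N\cdot\id_X$ lifts through $s$; hence the roofs of the form $X\xleftarrow{n\cdot\id_X}X$ are cofinal, and $\Hom_{\Mot/\cN}(X,Y)\cong\operatorname{colim}\bigl(\Hom_\Mot(X,Y)\to\Hom_\Mot(X,Y)\to\cdots\bigr)=\Hom_\Mot(X,Y)\otimes_\bbZ\bbQ=\Hom_{\Mot_\bbQ}(X,Y)$, compatibly with composition. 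Thus $\Mot_\bbQ=\Mot/\cN$, Verdier's theorem endows it with a triangulated structure, and $(-)_\bbQ$ is exact; concretely, a triangle of $\Mot_\bbQ$ is distinguished precisely when it is isomorphic to the image of a distinguished triangle of $\Mot$.

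For the monoidal assertion, since $\cN$ is a $\otimes$-ideal the symmetric monoidal structure of $\Mot$ descends to $\Mot/\cN=\Mot_\bbQ$ with the same unit, and $(-)_\bbQ$ becomes symmetric monoidal, the associativity, unit and symmetry constraints downstairs being the images of those upstairs. Moreover $-\otimes N$ is exact on $\Mot_\bbQ$ because it is so on $\Mot$ and every distinguished triangle of $\Mot_\bbQ$ is the image of one of $\Mot$; so $\Mot_\bbQ$ is $\otimes$-triangulated and $(-)_\bbQ$ is a $\otimes$-triangulated functor. The identical argument, run with $\cN\cap\Mix$ inside the $\otimes$-triangulated subcategory $\Mix\subset\Mot$ of \S\ref{sub:Konts}, handles $\Mix_\bbQ$; flatness of $\bbQ$ over $\bbZ$ together with the fullness of $\Mix$ in $\Mot$ gives that $\Mix_\bbQ\subset\Mot_\bbQ$ is again a full $\otimes$-triangulated subcategory.

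The only genuine obstacle is the cofinality computation identifying $\Hom_{\Mot/\cN}$ with the $\bbQ$-linearized groups, which rests entirely on the torsion estimates for cones established in the first step. If one prefers to bypass the Verdier machinery, one can instead verify the axioms of a triangulated category on $\Mot_\bbQ$ directly by clearing denominators: write each morphism as $f/n$ with $f$ in $\Mot$, lift the data of each axiom to $\Mot$ after multiplying by suitable integers, apply the axiom there, and divide back; in that approach the mild annoyance is the octahedral axiom, where the auxiliary integers must be chosen compatibly, but this too is routine.
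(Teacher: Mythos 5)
Your proof is correct, but takes a genuinely different route from the paper. The paper's proof is a one-line citation of Balmer's central-localization theorem (\cite[Thm.~3.6]{Balmer-SSS}): one observes, using \eqref{eq:key} applied to $\uk$, that the endomorphism ring of the $\otimes$-unit $U(\uk)$ is $K_0(k)=\bbZ$ (here the hypothesis that $k$ is a field enters), and then localizes the $\otimes$-triangulated category at the multiplicative set $\bbZ\setminus\{0\}$ sitting in this endomorphism ring. You instead build the same localization by hand as the Verdier quotient $\Mot/\cN$ by the thick $\otimes$-ideal $\cN$ of torsion objects, verify the ideal property and the torsion estimate for cones of $n\cdot\id_U$, and then identify the quotient $\Hom$-groups with $\Hom\otimes_\bbZ\bbQ$ via a cofinality computation in the calculus of fractions. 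This is essentially the content of Balmer's theorem unpacked in the special case $S=\bbZ\setminus\{0\}$; your version is longer but entirely self-contained, and notably does not invoke the computation $\End(U(\uk))\simeq K_0(k)$ nor, therefore, the hypothesis that $k$ is a field, whereas the paper's shorter argument leans on both. The paper's approach buys brevity and locates the result inside Balmer's general tensor-triangular framework; yours buys elementarity and slightly greater generality. Two small presentational remarks: the sentence beginning ``since $a\cdot\delta=\delta\circ(a\cdot\id_A[-1])[1]$'' is garbled before you correct yourself, and the closing paragraph about verifying the octahedral axiom by clearing denominators is an unnecessary alternative that could be cut.
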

\begin{proof}
Since both cases are similar we focus on the category $\Mot_\bbQ$. The dg category $\uk$ is smooth and proper and so the above isomorphism \eqref{eq:key} restricts to
\begin{equation*}
\Hom_{\Mot}(U(\uk), U(\uk)) \simeq K_0(k)\,.
\end{equation*}
Since by hypothesis $k$ is a field, we conclude that the endomorphism ring of the unit $U(\uk)$ of the $\otimes$-triangulated category $\Mot$ is $K_0(k)=\bbZ$. Hence, by applying \cite[Thm.~3.6]{Balmer-SSS} to the multiplicative set $S:=\bbZ \setminus \{0\} \subset K_0(k)$ of the endomorphism ring of $U(\uk)$ we obtain a canonical $\otimes$-triangulated structure on $\Mot_\bbQ$ making the functor $(-)_\bbQ$ $\otimes$-triangulated.
\end{proof}
\subsection{Idempotent completion}\label{sub-idemp}
The $\otimes$-triangulated categories $\Mix_\bbQ$ and $\Mot_\bbQ$ of the previous section are not idempotent complete. Thanks to Balmer-Schlichting \cite{BS}, their idempotent completions $\Mix_\bbQ^\natural$ and $\Mot_\bbQ^\natural$ carry a canonical triangulated structure. By construction they carry also a canonical $\otimes$-structure. Hence, we obtain natural $\otimes$-triangulated functors
\begin{eqnarray}
(-)^\natural: \Mix_\bbQ \too \Mix_\bbQ^\natural && (-)^\natural: \Mot_\bbQ \too \Mot_\bbQ^\natural\,.
\end{eqnarray} 
\section{Orbit categories}\label{sec:orbit}
Let $\cC$ be an additive category and $F: \cC \to \cC$ an automorphism (a standard construction allow us to replace an autoequivalence by an automorphism). By definition, the {\em orbit category} $\cC/F$ has the same objects as $\cC$ and morphisms
$$ \Hom_{\cC/F}(X,Y) := \bigoplus_{j \in \bbZ} \Hom_\cC(X, F^jY)\,.$$
Composition is induced by the one on $\cC$. The canonical projection functor $\pi: \cC \to \cC/F$ is endowed with a natural isomorphism $\pi \circ F \stackrel{\sim}{\Rightarrow} \pi$ and is universal among all such functors. By construction, $\cC/F$ is still an additive category and the projection is an additive functor.

Let us now suppose that $\cC$ is moreover endowed with a symmetric monoidal structure such that the tensor product $-\otimes-$ is bi-additive.
\begin{lemma}\label{lem:tec}
If the automorphism $F$ is of the form $-\otimes \cO$, with $\cO$ a $\otimes$-invertible object in $\cC$, then the orbit category $\cC/_{\!\!-\otimes \cO}$ inherits a natural symmetric monoidal structure making the projection functor $\pi$ symmetric monoidal.
\end{lemma}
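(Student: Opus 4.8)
Write $F=-\otimes\cO$, so that $F^jY=Y\otimes\cO^{\otimes j}$ for every $j\in\bbZ$ (for $j<0$ one uses a fixed $\otimes$-inverse of $\cO$). The plan is to keep the objects and the projection functor $\pi$ fixed, to manufacture a tensor product on $\cC/_{\!\!-\otimes\cO}$ that makes $\pi$ \emph{strictly} symmetric monoidal, and then to transport all constraint isomorphisms of $\cC$ along $\pi$. Since $\cO$ is $\otimes$-invertible, the associativity and symmetry constraints of $\cC$ provide, for all objects $Y,Y'$ and all $j,l\in\bbZ$, a canonical isomorphism
\[
 c^{Y,Y'}_{j,l}\colon\ F^jY\otimes F^lY'\ \isoto\ F^{j+l}(Y\otimes Y')\,,
\]
natural in $Y$ and $Y'$, with $c^{Y,Y'}_{0,0}=\id$. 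First I would keep $X\otimes Y$ and the unit object as in $\cC$, and define on morphisms $\underline f=(f_j)_j\colon X\to Y$ and $\underline g=(g_l)_l\colon X'\to Y'$ of $\cC/_{\!\!-\otimes\cO}$ (so $f_j\in\Hom_\cC(X,F^jY)$ and $g_l\in\Hom_\cC(X',F^lY')$, almost all zero) the morphism $\underline f\otimes\underline g\colon X\otimes X'\to Y\otimes Y'$ with components
\[
 (\underline f\otimes\underline g)_n\ :=\ \sum_{j+l=n} c^{Y,Y'}_{j,l}\circ\bigl(f_j\otimes g_l\bigr)\ \in\ \Hom_\cC\bigl(X\otimes X',\,F^n(Y\otimes Y')\bigr)\,.
\]
Each sum is finite and only finitely many $n$ contribute, so this is well defined, and it is bi-additive because $-\otimes-$ is bi-additive on $\cC$. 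When $\underline f=\pi(f)$ and $\underline g=\pi(g)$ only the $n=0$ term survives and $c_{0,0}=\id$, so $\pi(f)\otimes\pi(g)=\pi(f\otimes g)$ and $\pi(X\otimes Y)=\pi(X)\otimes\pi(Y)$: the projection is strictly compatible with these data.

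Next I would declare the associativity, symmetry and unit constraints of $\cC/_{\!\!-\otimes\cO}$ to be $\pi(a_{X,Y,Z})$, $\pi(\sigma_{X,Y})$, $\pi(\lambda_X)$, $\pi(\rho_X)$. These are isomorphisms (images of isomorphisms under a functor); their pentagon, hexagon, triangle and $\sigma^2=\id$ axioms in $\cC/_{\!\!-\otimes\cO}$ are the $\pi$-images of the corresponding commuting diagrams of $\cC$, hence commute; and their naturality against an arbitrary morphism of $\cC/_{\!\!-\otimes\cO}$ reduces, by bi-additivity, to naturality against a homogeneous morphism of some degree $j$. Such a morphism factors in $\cC/_{\!\!-\otimes\cO}$ as a $\pi$-image followed by the canonical isomorphism $\pi(F^jY)\isoto\pi(Y)$ supplied by $\pi\circ F\isoto\pi$, so the required naturality follows from naturality in $\cC$ together with the naturality of the isomorphisms $c^{Y,Y'}_{j,l}$. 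Granting the next paragraph, this produces a symmetric monoidal structure on $\cC/_{\!\!-\otimes\cO}$ for which $\pi$ is strict symmetric monoidal.

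The one substantive point is that $-\otimes-$ is a bifunctor on $\cC/_{\!\!-\otimes\cO}$. Preservation of identities is immediate from $c_{0,0}=\id$. For compatibility with composition, substituting the orbit-category law $(\underline h\circ\underline f)_n=\sum_j F^j(h_{n-j})\circ f_j$ into both sides of the desired equality $(\underline h_2\circ\underline h_1)\otimes(\underline h'_2\circ\underline h'_1)=(\underline h_2\otimes\underline h'_2)\circ(\underline h_1\otimes\underline h'_1)$ and comparing degree by degree---legitimate by bi-additivity and the interchange law of $\cC$---collapses the statement to the single cocycle-type identity
\[
 c^{Z,Z'}_{i+j,\,i'+l}\circ\bigl(F^j(\beta)\otimes F^l(\beta')\bigr)\ =\ F^{j+l}\bigl(c^{Z,Z'}_{i,i'}\circ(\beta\otimes\beta')\bigr)\circ c^{Y,Y'}_{j,l}\,,
\]
required for all $j,l\in\bbZ$ and all $\beta\colon Y\to F^iZ$, $\beta'\colon Y'\to F^{i'}Z'$. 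Both sides are morphisms of $\cC$ assembled purely from the associativity, symmetry and unit constraints, from $\beta,\beta'$, and from the canonical isomorphisms $\cO^{\otimes a}\otimes\cO^{\otimes b}\isoto\cO^{\otimes(a+b)}$, so they coincide by naturality together with Mac Lane's coherence theorem for symmetric monoidal categories. I expect precisely this verification---tracking the shuffles of the $\cO$-factors through the coherence isomorphisms, uniformly in $j$ and $l$ and including negative exponents---to be the only genuinely delicate step; the rest is formal, and the whole construction is visibly functorial in $\cC$.
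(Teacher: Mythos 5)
Your proposal is correct and takes essentially the same approach as the paper: define the tensor product on objects as in $\cC$, on homogeneous morphisms via the canonical shuffle of the $\cO$-factors, and transport the constraints along $\pi$. The paper dismisses the remaining checks as ``a routine verification,'' whereas you usefully spell out the cocycle-type identity behind bifunctoriality and observe that it follows from coherence; but the construction and the strategy are identical.
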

\begin{proof}
Let us start by defining the tensor product on $\cC/_{\!\!-\otimes \cO}$. On objects it is the same as the one on $\cC$. On morphisms let
$$ \bigoplus_{j \in \bbZ} \Hom_\cC(X, Y \otimes \cO^j) \times \bigoplus_{j \in \bbZ} \Hom_\cC(Z, W \otimes \cO^j) \too \bigoplus_{j \in \bbZ} \Hom_\cC(X\otimes Z, (Y\otimes W) \otimes \cO^j)$$
be the unique bilinear morphism which sends the homogeneous maps $f: X \to Y \otimes \cO^r$ and $g: Z \to W \otimes \cO^s$ to the homogeneous map
$$ X\otimes Z \stackrel{f\otimes g}{\too} Y \otimes \cO^r \otimes W \otimes \cO^s \stackrel{\tau}{\too} (Y\otimes W) \otimes \cO^{(r+s)}\,,$$
where $\tau$ is the commutativity isomorphism constraint. The associativity and commutativity isomorphism constraints are obtain from the corresponding ones of $\cC$ by applying the projection functor $\pi$. A routine verification shows that these definitions endow the orbit category $\cC/_{\!\!-\otimes \cO}$ with a symmetric monoidal structure making the projection functor $\pi$ symmetric monoidal.
\end{proof}  
\section{Proof of Theorem~\ref{thm:main}}
\begin{proof}
The bulk of the proof consists on constructing functors $\theta_1, \theta_2$ and $\theta_3$ making the following diagram commutative
\begin{equation*}\label{eq:main-diag}
\xymatrix{
\SmProj^\op \ar[d]_M \ar@{=}[r] & \SmProj^\op \ar[dd]^{\eqref{eq:func-Manin}} \ar@{=}[r] & \SmProj^\op \ar[dd]^{\eqref{eq:func-GS}} \ar[r]^-{\perf(-)} & \Hmo \ar[d]^U \\
\Chow_\bbQ \ar[d]_\pi & &  & \Mot \ar[d]^{(-)_\bbQ^\natural} \\
\Chow_\bbQ\!/_{\!\!-\otimes\bbQ(1)} & \CHM_\bbQ \ar[l]^-{\theta_1} & \KM_\bbQ \ar[l]^-{\theta_2}  \ar[r]_-{\theta_3} & \Mix_\bbQ^\natural \subset \Mot_\bbQ^\natural \qquad \qquad
}
\end{equation*}
Let us start with the functor $\theta_1$. Given Chow motives $(X,p,m)$ and $(Y,q,n)$ the description of the orbit category given in the previous section show us that
$$ \Hom_{\Chow_\bbQ\!/_{\!\!-\otimes\bbQ(1)}}((X,p,m),(Y,q,n))=q \circ \bigoplus_{j \in \bbZ} \Corr^{(n+j)-m}(X,Y) \circ p\,.$$
Hence, we define $\theta_1$ to be the functor that maps $(X,p)$ to $(X,p,0)$ and which is the identity on morphisms. By construction, it is clearly fully-faithful. Notice that the identity of $X$, \ie the class of the diagonal $\Delta$ in $\Corr^0(X,X)$, can be considered as an isomorphism in $\Chow_\bbQ\!/_{\!\!-\otimes\bbQ(1)}$ between $(X,p,0)$ and $(X,p,m)$. This implies that $\theta_1$ is moreover essentially surjective and so an equivalence of categories. Note that $\theta_1$ makes the left rectangle in the above diagram commutative. Moreover, it is $\bbQ$-linear, additive and symmetric monoidal.

Let us now focus our attention on the functor $\theta_2$. Similarly to $\KM_\bbQ$, the category $\CHM_\bbQ$ is obtained as the pseudo-abelianization of a category $CHM_\bbQ$ whose objects are the smooth projective $k$-varieties. Hence, we define $\theta_2$ to be the pseudo-abelianization of the auxiliar functor $\widetilde{\theta_2}:KM_\bbQ \to CHM_\bbQ$, $X \mapsto X$, defined as follows
\begin{eqnarray*}
K_0(X\times Y)_\bbQ \too  \bigoplus_{j \in \bbZ} \Corr^j(X,Y) &&
\alpha \mapsto  ch(\alpha) \cdot \pi_Y^\ast(Td(Y))\,,
\end{eqnarray*}
where $ch$ denotes the Chern character and $Td$ the Todd genus. Note that the Grothendieck-Riemann-Roch theorem guarantees that this functor $\widetilde{\theta_2}$ is well-defined; see~\cite[page~39]{GS1}. Moreover, it is fully-faithful and induces a bijection on objects. Hence, the associated functor $\theta_2$ is an equivalence of categories. Finally, the Grothendieck-Riemann-Roch theorem and the fact that the functors \eqref{eq:func-Manin} and \eqref{eq:func-GS} factor through $CHM_\bbQ$ and $KM_\bbQ$, respectively, allow us to conclude that the middle rectangle of the above diagram is commutative. Note also that by construction $\theta_2$ is $\bbQ$-linear, additive and symmetric monoidal.

Let us now consider the functor $\theta_3$. Recall from \S\ref{sub:GS} and \S\ref{sub-idemp} that we have a pseudo-abelianization functor $KM_\bbQ \to \KM_\bbQ$ and an idempotent completion functor $\Mix_\bbQ \to \Mix_\bbQ^\natural$. We will then construct an auxiliar functor $\widetilde{\theta_3}: KM_\bbQ \to \Mix_\bbQ$ and define $\theta_3$ as the canonical extension of the composed functor 
$$KM_\bbQ \stackrel{\widetilde{\theta_3}}{\too} \Mix_\bbQ \stackrel{(-)^\natural}{\too} \Mix_\bbQ^\natural$$
to $\KM_\bbQ$. Given smooth projective $k$-varieties $X$ and $Y$ we have the following natural isomorphisms: 
\begin{eqnarray}
\Hom_{\Mix_\bbQ}(U(\perf(X))_\bbQ, U(\perf(Y))_\bbQ) & =  & 
K_0((\perf(X))^\op \otimes \perf(Y))_\bbQ \nonumber \\
& \simeq & K_0(\perf(X) \otimes \perf(Y))_\bbQ \label{eq:star2} \\
& \simeq & K_0(\perf(X\times Y))_\bbQ \label{eq:star3} \\
& \simeq & K_0(X \times Y)_\bbQ \,. \label{eq:star4}
\end{eqnarray}
Isomorphism \eqref{eq:star2} follows from the fact that the dg category $\perf(X)$ is self-dual, Isomorphism \eqref{eq:star3} from the natural isomorphism $\perf(X) \otimes \perf(Y) \simeq \perf(X\times Y)$ in $\Hmo$, and Isomorphism \eqref{eq:star4} from the classical fact that the Grothendieck group of an algebraic variety can be recovered from its derived category of perfect complexes. Under these isomorphisms, composition in $\Mix_\bbQ$ corresponds to
$$
\begin{array}{ccc}
K_0(X \times Y)_\bbQ \times K_0(Y\times Z)_\bbQ  & \too &  K_0(X \times Z)_\bbQ \\
([\cF], [\cG]) & \mapsto & [\cF \otimes^\bbL_{\cO_Y} \cG]=\sum_{i \geq 0} (-1)^i [\mathrm{Tor}_i^{\cO_Y}(\cF,\cG)]\,.
\end{array}
$$
Hence, we define $\widetilde{\theta_3}$ as the functor that sends a smooth projective $k$-variety $X$ to $U(\perf(X)_\bbQ)$ and which is the identity on morphisms (via the isomorphisms \eqref{eq:star2}-\eqref{eq:star4}). By construction, we obtain then a fully-faithful functor $\theta_3$ which makes the right rectangle in the above diagram commutative. Moreover, it is $\bbQ$-linear, additive and symmetric monoidal.

Finally, choose inverse functors $\theta_1^{-1}$ and $\theta_2^{-1}$ to $\theta_1$ and $\theta_2$ and define $R$ as the composition $\theta_3 \circ \theta_2^{-1}\circ \theta_1^{-1}$. By construction, $R$ is fully-faithful and makes the diagram \eqref{eq:diag-main} commutative. Finally, the fact that it is moreover $\bbQ$-linear, additive and symmetric monoidal follows from the corresponding properties of the functors $\theta_1$, $\theta_2$ and $\theta_3$.
\end{proof}
\section{Remaining proofs}\label{sec:proofs}
\subsection*{Theorem~\ref{thm:2}}
Recall from \cite{Deligne} that Schur and Kimura finiteness is preserved by symmetric monoidal $\bbQ$-linear functors. Hence, item (i) follows from the commutativity of diagram~\eqref{eq:diag-main} and from the fact 
that both functors $\pi$ and $R$ are $\bbQ$-linear and symmetric monoidal. Item (ii) follows 
from the faithfulness of $\pi$ and $R$.
\subsection*{Theorem~\ref{thm:3}}
Since the functors $\pi$ and $R$ in diagram \eqref{eq:diag-main} are both additive and symmetric monoidal, their composite gives rise to a ring homomorphism
\begin{equation}\label{eq:ring}
K_0(\Chow_\bbQ) \too K_0(\Mix_\bbQ^\natural)\,.
\end{equation}
Recall from \cite{GS1} that Gillet-Soul{\'e}'s motivic measure $\mu_{GS}$ sends a smooth projective $k$-variety $X$ to $[M(X)]$. Hence, by composing \eqref{eq:ring} with $\mu_{GS}$ we obtain a well-defined motivic measure $\mu_{NC}$ sending a smooth projective $k$-variety $X$ to $[NC(X)]$.
\subsection*{Example~\ref{ex:main}}
Recall from \cite[\S8.4]{CT1} that Hochschild homology $HH$ gives rise to a $\otimes$-triangulated functor $\Mix \to \cD_c(\bbC)$. Since $\cD_c(\bbC)$ is idempotent complete and $\bbQ$-linear, this functor extends uniquely to a $\otimes$-triangulated functor $\overline{HH}: \Mix_\bbQ \to \cD_c(\bbC)$ and so it induces a ring homomorphism
\begin{equation}\label{eq:homo-1}
K_0(\Mix_\bbQ^\natural) \to K_0(\cD_c(\bbC)) =\K_0(\bbC) =\bbZ\,.
\end{equation} 
The natural equalities
$$ [\overline{HH}(NC(X))] =[HH(\perf(X))]=[HH(X)] = \sum_{i \geq 0}(-1)^i \mathrm{dim}\,H^i(X, \Omega_X)\,,$$
for every smooth projective $\bbC$-variety $X$, combined with Bittner-Looijenga's presentation of $K_0(\Var_\bbC)$ (see \cite{Bittner,Looijenga}), allow us then to conclude that $\mu_{\chi_c}$ factors through $\mu_{NC}$ via the ring homomorphism \eqref{eq:homo-1}.
\subsection*{Theorem~\ref{thm:4}}
By definition of the formal power series $\zeta(NC(X);t)$ and $\zeta_{\mu_{NC}}(X;t)$, it suffices to show the equality
\begin{equation}\label{eq:1}
[\Sym^nNC(X)] = \mu_{NC}([S^n(X)]) \in K_0(\Mix_\bbQ^\natural)
\end{equation}
for every natural number $n$. A similar equality
\begin{equation}\label{eq:2}
[\Sym^nM(X)]=\mu_{GS}([S^n(X)]) \in K_0(\Chow_\bbQ)
\end{equation}
in the setting of Chow motives was proved by Ba{\~n}o-Aznar in \cite{BN}. Since the functors $\pi$ and $R$ in diagram \eqref{eq:diag-main} are both symmetric monoidal and $\bbQ$-linear, their composition gives rise to a ring homomorphism
\begin{equation}\label{eq:ultimo}
\K_0(\Chow_\bbQ) \too K_0(\Mix_\bbQ^\natural)
\end{equation}
which maps $[\Sym^nM(X)]$ to $[\Sym^nNC(X)]$. Hence, by applying this ring homomorphism \eqref{eq:ultimo} to the equalities \eqref{eq:2} we obtain the searched equalities \eqref{eq:1}.

\medbreak

\noindent\textbf{Acknowledgments:} The author is very grateful to Alexander Beilinson, Denis-Charles Cisinski, Maxim Kontsevich, Haynes Miller, Carlos Simpson, and Bertrand To{\"e}n for stimulating conversations. He would also like to thank the Department of Mathematics of MIT for its hospitality and excellent working conditions.


\begin{thebibliography}{00}


\bibitem{Andre} Y.~Andr{\'e}, {\em Motifs de dimension finie (d'apr{\`e}s S.-I. Kimura, P. O'Sullivan $\dots$)}. S{\'e}minaire Bourbaki. Vol. 2003/2004. Ast{\'e}risque No. {\bf 299} (2005), Exp. No. 929, viii, 115--145. 

\bibitem{Andre1} \bysame, {\em An introduction to motivic zeta functions of motives}. Available at arXiv:0812.3920.

\bibitem{AK} Y.~Andr{\'e} and B.~Kahn, {\em Nilpotence, radicaux et structures mono{\"i}dales (avec un appendice de P.O'Sullivan)}. Rendiconti del Seminario Matematico dell'Universit{\`a} di Padova {\bf 108} (2002), 107--291.

\bibitem{Balmer-SSS} P.~Balmer, {\em Spectra, Spectra, Spectra - Tensor triangular spectra versus Zariski spectra of endomorphisms}. Available at {\tt www.math.ucla.edu/balmer}.

\bibitem{BS} P.~Balmer and M.~Schlichting, {\em Idempotent completion of triangulated categories}. J. Algebra {\bf 236} (2001), no. 2, 819--834. 

\bibitem{BN} S.~del Ba{\~n}o and V.~Aznar, {\em On the motive of a quotient variety}. Collect. Math. {\bf 49} (1998), no. 2-3, 203--226.

\bibitem{Be} A.~Beilinson, {\em The derived category of coherent sheaves on $\bbP^n$}. Selecta Math. Soviet. {\bf 3} (1983), 233--237.

\bibitem{Bittner} F.~Bittner {\em The universal Euler characteristic for varieties of characteristic zero}. Compositio Mathematica {\bf 140} (2004), no. 4, 1011--1032. 

\bibitem{Kapranov} A.~Bondal and M.~Kapranov, {\em Framed triangulated categories}. (Russian) Mat. Sb. {\bf 181} (1990), no. 5, 669--683; translation in Math. USSR-Sb. {\bf 70} (1991), no. 1, 93--107.  

\bibitem{BLL} A.~Bondal, M.~Larsen and V.~Lunts, {\em Grothendieck ring of pretriangulated categories}. Int. Math. Res. Not. {\bf 29} (2004), 1461--1495. 

\bibitem{BB} A.~Bondal and M.~Van den Bergh, {\em Generators and representability of functors in commutative and noncommutative geometry}. Moscow Math. J. {\bf 3} (2003), 1--36.




\bibitem{CT} D.-C.~Cisinski and G.~Tabuada, {\em Non-connective $K$-theory via universal invariants}. Available at arXiv:0903.3717v2. To appear in Compositio Mathematica.
 
\bibitem{CT1} \bysame, {\em Symmetric monoidal structure on Non-commutative motives}. Available at arXiv:1001.0228v2.



\bibitem{Deligne} P.~Deligne, {\em Cat{\'e}gories tensorielles}. Dedicated to Yuri~I.~Manin on the occasion of his 65th birthday. Mosc. Math. J. {\bf 2} (2002), no. 2, 227--248.


\bibitem{Drinfeld} V.~Drinfeld, {\em DG quotients of DG categories}.
J. Algebra {\bf 272} (2004), 643--691.

\bibitem{Chitalk} \bysame, {\em DG categories}. University of Chicago geometric Langlands seminar 2002. Notes available at \url{www.math.utexas.edu/users/benzvi/GRASP/lectures/Langlands.html}.




\bibitem{GS1} H.~Gillet and C.~Soul{\'e}, {\em Motivic weight complexes for arithmetic varieties}. J.~Algebra {\bf 322} (2009), no. 9, 3088--3141.

\bibitem{GS2} \bysame, {\em Descent, motives and $K$-theory.} J. Reine Angew. Math. {\bf 478} (1996), 127--176.



\bibitem{Guletskii} V.~Guletskii, {\em Finite-dimensional objects in distinguished triangles}. J. Number Theory {\bf 119} (2006), no. 1, 99--127.

\bibitem{GP} V.~Guletskii and C.~Pedrini, {\em Finite dimensional motives and the conjecture of Murre
and Beilinson}. $K$-Theory {\bf 30} (2003),  no. 3, 243--263.


    


\bibitem{Jannsen} U.~Jannsen, S.~Kleiman and J.-P. Serre, {\em Motives}. Proceedings of the AMS-IMS-SIAM Joint Summer Research Conference held at the University of Washington, Seattle, Washington, July 20--August 2, 1991. Proceedings of Symposia in Pure Mathematics {\bf 55}, part 1 and 2. American Mathematical Society, Providence, RI, 1994.



\bibitem{Kaledin} D.~Kaledin, {\em Motivic structures in non-commutative geometry}. Available at arXiv:$1003.3210$. To appear in the Proceedings of the ICM 2010.


\bibitem{Kapranov1} M.~Kapranov, {\em The elliptic curve in the S-duality theory and Eisenstein series for Kac-Moody groups}. Available at arXiv:0001005v2.

\bibitem{ICM} B.~Keller, {\em On differential graded
    categories}, International Congress of Mathematicians (Madrid), Vol.~II,
  151--190. Eur.~Math.~Soc., Z{\"u}rich (2006).




    
\bibitem{Kimura} S.~I.~Kimura, {\em Chow groups are finite dimensional, in some sense}. Math. Ann. {\bf 331} (2005), no. 1, 173--201.     
    
\bibitem{IAS} M.~Kontsevich, {\em Non-commutative motives}.
Talk at the Institute for Advanced Study on the occasion of the 61st birthday of Pierre Deligne, October 2005.
Video available at \url{http://video.ias.edu/Geometry-and-Arithmetic}.    

\bibitem{ENS} \bysame, {\em Triangulated categories and geometry}. Course at the {\'E}cole Normale Sup{\'e}rieure, Paris, 1998. Notes available at \url{www.math.uchicago.edu/mitya/langlands.html}
    
\bibitem{Miami} \bysame, {\em Mixed non-commutative motives}. Talk at the Workshop on Homological Mirror Symmetry. University of Miami. 2010. Notes available at \url{www-math.mit.edu/auroux/frg/miami10-notes}.  

\bibitem{finMot} \bysame, {\em Notes on motives in finite characteristic}.  Algebra, arithmetic, and geometry: in honor of Yu. I. Manin. Vol. II,  213--247, Progr. Math., {\bf 270}, BirkhŠuser Boston, MA, 2009.    
    


\bibitem{Looijenga} E.~Looijenga, {\em Motivic measures}. S{\'e}minaire Bourbaki. Ast{\'e}risque {\bf 276} (2002), 267--297. 

\bibitem{LO} V.~Lunts and D.~Orlov, {\em Uniqueness of enhancement for triangulated categories}. J. Amer. Math. Soc. {\bf 23} (2010), 853--908.


\bibitem{Manin} I.~Manin, {\em Correspondences, motifs and monoidal transformations}. (Russian)
Mat. Sb. (N.S.) {\bf 77} (1968) 475--507. 

\bibitem{Mazza} C.~Mazza, {\em Schur functors and motives}. $K$-Theory {\bf 33} (2004), no. 2, 89--106.








\bibitem{Scholl} A.~J.~Scholl, {\em Classical motives}. Motives (Seattle, WA, 1991),  163--187, Proc. Sympos. Pure Math., 55, Part {\bf 1}, Amer. Math. Soc., Providence, RI, 1994.

\bibitem{Duke} G.~Tabuada, {\em Higher $K$-theory via universal invariants}. Duke Math.~J. {\bf 145} (2008), 121--206. 

\bibitem{IMRN} \bysame, {\em Additive invariants of dg categories}. Int. Math. Res. Not. {\bf 53} (2005), 3309--3339.




    
    
    
   





\end{thebibliography}
\end{document}